\theoremstyle{plain}
\newtheorem{thm}[subsection]{Theorem}
\newtheorem{lem}[subsection]{Lemma}
\newtheorem{prop}[subsection]{Proposition}
\newtheorem{cor}[subsection]{Corollary}
\newtheorem{ex}[subsection]{Example}
\newtheorem{rem}[subsection]{Remark}
\newtheorem{defn}[subsection]{Definition}
\newcommand{\ga}{\alpha}
\newcommand{\gb}{\beta}
\newcommand{\gl}{\lambda}
\newcommand{\gm}{\mu}
\newcommand{\gn}{\nu}
\newcommand{\gx}{\xi}
\newcommand{\go}{\omega}
\newcommand{\gr}{\rho}
\newcommand{\gs}{\sigma}
\newcommand{\cB}{\mathcal{B}}
\newcommand{\cL}{\mathcal{L}}
\newcommand{\cP}{\mathcal{P}}
\newcommand{\cR}{\mathcal{R}}
\newcommand{\cT}{\mathcal{T}}
\newcommand{\bZ}{\mathbb{Z}}
\newcommand{\bN}{\mathbb{N}}
\newcommand{\bC}{\mathbb{C}}
\newcommand{\bs}{\boldsymbol}
\newcommand{\bgl}{\bs{\gl}}
\newcommand{\blm}{_{\bgl}^\gm}
\newcommand{\Bl}{\cB_{\bgl}}
\newcommand{\Llm}{\cL\cR_{\bgl}^\gm}
\newcommand{\by}{\mathbf{y}}
\newcommand{\yo}{y^{(1)}}
\newcommand{\yt}{y^{(2)}}
\newcommand{\yth}{y^{(3)}}
\newcommand{\yi}{y^{(i)}}
\newcommand{\yr}{y^{(r)}}
\newcommand{\xy}{(x\,|\,y)}
\newcommand{\xyo}{(x\,|\,\yo)}
\newcommand{\xyi}{(x\,|\,\yi)}
\newcommand{\xyr}{(x\,|\,\yr)}
\newcommand{\xyb}{(x\,|\,\by)}
\newcommand{\hn}{_{(n)}}
\def\ni{\noindent}
\def\DS{\displaystyle}
\newcommand{\ol}{\overline}
\newcommand{\sgn}{\mathop{\rm sgn}\nolimits}
\newcommand{\ch}{\mathop{\rm Char}\nolimits}
\newcommand{\vs}{\vspace{1em}}
\title{Products of Factorial Schur Functions}
\author{Victor Kreiman}
\address{Department of Mathematics, University of Georgia\\
Athens, GA 30602 USA}
\email{vkreiman@math.uga.edu}
\begin{document}
\maketitle

\begin{abstract}
The product of any finite number of factorial Schur functions can
be expanded as a $\bZ[\by]$-linear combination of Schur functions.
We give a rule for computing the coefficients in such an expansion
which generalizes a specialization of the Molev-Sagan rule, which
in turn generalizes the classical Littlewood-Richardson rule.
\end{abstract}

\setcounter{tocdepth}{1} \tableofcontents

\section{Introduction}

Let $\cP_n$ denote the set of partitions
$\{\gl=(\gl_1,\ldots,\gl_n)\in\bN^n\mid\gl_1\geq\cdots\geq
\gl_n\}$, and for $\gl\in\cP_n$, let $\cT_{\gl}$ denote the set of
all Young tableaux of shape $\gl$ with entries in
$\{1,\ldots,n\}$.  Let $x=(x_1,\ldots,x_n)$ and
$y=(y_1,y_2,\ldots)$ be two sets of variables.  The Schur function
$s_\gl(x)$ and factorial Schur function $s_\gl\xy$ are defined as
follows:
\begin{equation*}
s_\gl(x)=\sum_{T\in \cT_{\gl}}\prod_{a\in T} x_{a}\quad\text{and}\quad
s_{\gl}\xy=\sum_{T\in \cT_{\gl}}\prod_{a\in T}\left(x_{a}+y_{a+c(a)-r(a)}\right),
\end{equation*}
where for entry $a\in T$, $c(a)$ and $r(a)$ are the column and row
numbers of $a$ respectively.  Note that if $y$ is  specialized to
$(0,0,\ldots)$, then $s_\gl\xy=s_\gl(x)$. Factorial Schur
functions are special cases of Lascoux and Sch\"utzenberger's
double Schubert polynomials \cite{La-Sc1, La-Sc2}. Various
versions of factorial Schur functions and their properties have
been studied by \cite{Bi-Lo}, \cite{Ch-Lo}, \cite{Go-Gr},
\cite{La}, \cite{Mac1}, \cite{Mac2}, \cite{Mo1}, and \cite{Mo2}
(see \cite{Mi} and \cite{Mo-Sa} for more discussion of these
functions).

Let $r\in\bN$.  For $i\in\{1,\ldots,r\}$, let
$y^{(i)}=(y^{(i)}_1,y^{(i)}_2,\ldots)$ be an infinite set of
variables, and let $\by$ denote the set of variables
$(y^{(1)},\dots, y^{(r)})$. As $\gl$ varies over $\cP_n$, both the
Schur functions $s_\gl(x)$ and the factorial Schur functions
$s_\gl\xyi$ (where $i\in\{1,\ldots,r\}$ is fixed) form
$\bZ[\by]$-bases for $\bZ[x,\by]^{S_n}$, the ring of polynomials
in the $x$ and $\by$ variables which are symmetric in the $x$
variables. Hence for any sequence
$\bgl=(\gl^{(1)},\ldots,\gl^{(r)})$ of elements of $\cP_n$, the
polynomial $s_{\bgl}\xyb=s_{\gl^{(1)}}\xyo\cdots
s_{\gl^{(r)}}\xyr$ can be expanded as a $\bZ[\by]$-linear
combination of Schur functions:
\begin{equation}\label{e.lr_def}
s_{\bgl}\xyb=\sum_{\gm\in\cP_n} c\blm(\by) s_\gm(x),\quad\text{for some } c\blm(\by)\in \bZ[\by].
\end{equation}
We give a rule for computing the coefficients $c\blm(\by)$ for
arbitrary $\bgl$. Our rule generalizes a rule by Molev-Sagan
\cite[Theorem 3.1]{Mo-Sa}, which computes $c\blm(\by)$ for $r=2$,
$\yo=(0,0,\ldots)$. The Molev-Sagan rule in turn generalizes the
classical Littlewood-Richardson rule \cite{Li-Ri}, which computes
$c\blm(\by)$ for $r=2$, $\yo=\yt=(0,0,\ldots)$. When $r=1$, the
coefficients $c\blm(\by)$ give the change of $\bZ[\by]$-basis
coefficients between the Schur and factorial Schur functions.
Another rule for these change of basis coefficients which, like
ours, is tableau-based, is given by Molev \cite{Mo1}, and a
determinantal formula is given by Macdonald \cite{Mac2}. More
generally, change of basis coefficients between Schubert and
double Schubert polynomials were obtained by Macdonald \cite{Mac3}
and Lascoux \cite{La2}.

A related problem is to expand $s_{\bgl}\xyb$ in the basis of
factorial Schur functions $\{s_\gm\xyo, \gm\in \cP_n\}$. A
solution to this problem for $r=2$ was given by Molev-Sagan
\cite[Theorem 3.1]{Mo-Sa} (the Molev-Sagan rule discussed above is
obtained from this one by specializing  $\yo=(0,0,\ldots)$), and a
solution for $r=2$, $\yo=\yt$ which is positive in the sense of
Graham \cite{Gr} was given by Knutson-Tao \cite{Kn-Ta}, Molev
\cite{Mo1}, and Kreiman \cite{Kr2}.

The proof of our rule for $c\blm(\by)$ generalizes a concise proof
by Stembridge \cite{Stem1} of the classical Littlewood-Richardson
rule. Stembridge's proof relies on sign-reversing involutions on
skew tableaux which were introduced by Bender and Knuth
\cite{Be-Kn}. We generalize these arguments and constructions to
barred skew tableaux, which are refinements of skew tableaux. Our
proof is similar to but simpler than the proof used in \cite{Kr2},
where Stembridge's methods are generalized to hatted skew
tableaux.


\textbf{Acknowledgements.} We thank W. Graham and A. Molev for
helpful comments and suggestions.

\section{Computing the Coefficients $c\blm(\by)$}

As above, let $\bgl=(\gl^{(1)},\ldots,\gl^{(r)})$ be a sequence of
elements in $\cP_n$, or, alternatively, of Young diagrams. Denote
also by $\bgl$ the \textbf{skew diagram} formed by placing each
Young diagram in the sequence $\gl^{(1)},\ldots,\gl^{(r)}$ below
and to the left of the preceding one. A \textbf{barred skew
tableau $T$ of shape $\bgl$} is a filling of the boxes of the skew
diagram $\bgl$ with elements of
$\{1,\ldots,n\}\cup\{\ol{1},\ldots,\ol{n}\}$ in such a way that
the entries weakly increase along any row from left to right and
strictly increase along any column from top to bottom, without
regard to whether or not the entries are barred.

The \textbf{unbarred column word of $T$} is the sequence of
unbarred entries of $T$ obtained by beginning at the top of the
rightmost column, reading down, then moving to the top of the next
to rightmost column and reading down, etc. (the barred entries are
just skipped over in this process). We say that the unbarred
column word of $T$ is \textbf{Yamanouchi} if, when one writes down
its entries and stops at any point, one will have written at least
as many ones as twos, at least as many twos as threes, etc. The
\textbf{unbarred content of $T$} is
$\go(T)=(\gx_1,\ldots,\gx_n)\in\bN^{n}$, where $\gx_k$ is the
number of unbarred $k$'s in $T$. Define $c_T(\by)=\prod
y^{(i(a))}_{a+c(a)-r(a)}$, where the product is over all barred
$a\in T$, $i(a)$ is the index of the particular Young diagram in
which $a$ resides, and $c(a)$ and $r(a)$ denote the column and row
numbers of $a$ in this Young diagram.

\begin{figure}[!h]
\psset{unit=.5cm,linewidth=.02}
\pspicture(0,0)(11,9)
%
%
\psset{linecolor=ncyn} \psframe*(0,3)(1,4) \psframe*(1,1)(2,3)
\psframe*(3,2)(4,3) \psset{linecolor=black}
\psline(0,4)(5,4) \psline(0,3)(5,3) \psline(0,2)(4,2)
\psline(0,1)(2,1) \psline(0,0)(1,0)
\psline(0,0)(0,4) \psline(1,0)(1,4) \psline(2,1)(2,4)
\psline(3,2)(3,4) \psline(4,2)(4,4) \psline(5,3)(5,4)
\rput{0}(.5,.5){$5$}
\rput{0}(.5,1.5){$4$} \rput{0}(1.5,1.5){$\ol{5}$}
\rput{0}(.5,2.5){$3$} \rput{0}(1.5,2.5){$\ol{3}$}
\rput{0}(2.5,2.5){$4$} \rput{0}(3.5,2.5){$\ol{5}$}
\rput{0}(.5,3.5){$\ol{1}$} \rput{0}(1.5,3.5){$1$}
\rput{0}(2.5,3.5){$1$} \rput{0}(3.5,3.5){$1$}
\rput{0}(4.5,3.5){$2$}
%
%
\psset{linecolor=ncyn}\psframe*(5,5)(7,6)
\psset{linecolor=black}
\psline(5,6)(7,6) \psline(5,5)(7,5) \psline(5,4)(6,4)
\psline(5,4)(5,6) \psline(6,4)(6,6) \psline(7,5)(7,6)
\rput{0}(5.5,4.5){$4$}
\rput{0}(5.5,5.5){$\ol{3}$} \rput{0}(6.5,5.5){$\ol{4}$}
%
%
\psset{linecolor=ncyn}\psframe*(9,8)(10,9)
\psset{linecolor=black}
\psline(7,6)(9,6) \psline(7,7)(9,7) \psline(7,8)(11,8)
\psline(7,9)(11,9)
\psline(7,6)(7,9) \psline(8,6)(8,9) \psline(9,6)(9,9)
\psline(10,8)(10,9) \psline(11,8)(11,9)
\rput{0}(7.5,6.5){$3$} \rput{0}(8.5,6.5){$3$}
\rput{0}(7.5,7.5){$2$} \rput{0}(8.5,7.5){$2$}
\rput{0}(7.5,8.5){$1$} \rput{0}(8.5,8.5){$1$}
\rput{0}(9.5,8.5){$\ol{1}$} \rput{0}(10.5,8.5){$1$}
\endpspicture
\caption{\label{f.tableau}
A barred skew tableaux $T$ of shape $\bgl$ and unbarred
content $\gm$, where $\bgl=((4,2,2),(2,1),(5,4,2,1))$,
and $\gm=\go(T)=(6,3,3,3,1)$. The unbarred column word of $T$ is
$1123123421141345$, which is Yamanouchi.  Thus $T\in\Llm$.
We have
$c_T(\by)=\prod_{a\in T,a\text{ barred}}
  y^{(i(a))}_{a+c(a)-r(a)}
 =\yo_{1+3-1}\,\yt_{3+1-1}\,\yt_{4+2-1}\,\yth_{1+1-1}\,
   \yth_{3+2-2}\,\yth_{5+4-2}\,\yth_{5+2-3}=\yo_3\yt_3\yt_5$
$\yth_1\yth_3\yth_7\yth_4$.
}
\end{figure}

\begin{defn}
Denote the set of all barred skew tableaux of shape $\bgl$ by
$\Bl$, and the set of all barred skew tableaux of $\Bl$ of
unbarred content $\gm$ whose unbarred column words are Yamanouchi
by $\Llm$.
\end{defn}
\begin{thm}\label{t.lr_rule}
$\displaystyle
c\blm(\by)=\sum_{T\in\Llm}c_T(\by)=\sum_{T\in\Llm}\left(
\prod_{a\in T\atop a\text{
barred}}y^{(i(a))}_{a+c(a)-r(a)}\right)$.
\end{thm}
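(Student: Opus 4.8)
The plan is to follow the strategy of Stembridge's proof of the classical Littlewood–Richardson rule, generalized to the barred setting. First I would fix the ``extra'' variables $\by$ throughout and work inside the ring $\bZ[x,\by]^{S_n}$, treating the factorial Schur functions combinatorially. The key observation is that the tableau definition of $s_{\gl^{(i)}}\xyi$ assigns to each box containing entry $a$ the factor $(x_a + \yi_{a+c(a)-r(a)})$; expanding each such binomial factor into its two terms $x_a$ and $\yi_{a+c(a)-r(a)}$ and recording the choice with a bar (for the $\yi$ term) or no bar (for the $x_a$ term) is precisely what produces the barred entries. Concatenating the tableaux for the $r$ factors, placed according to the skew shape $\bgl$, I would show that
\begin{equation*}
s_{\bgl}\xyb=\sum_{T\in\Bl}c_T(\by)\,\prod_{a\in T,\,a\text{ unbarred}}x_a,
\end{equation*}
where the column-strict/row-weak conditions on barred skew tableaux reflect exactly the independent tableau conditions on each $\gl^{(i)}$ (no interaction constraints are imposed between different diagrams, since the product is literal). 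Grouping by unbarred content $\gm=\go(T)$ then rewrites the right side as $\sum_{\gm}\bigl(\sum_{T\in\Bl,\,\go(T)=\gm}c_T(\by)\bigr)\,x^{\gm}$, reducing the theorem to showing that $\sum_{T}c_T(\by)x^{\go(T)}$ collapses, after symmetrization, to $\sum_{\gm}\bigl(\sum_{T\in\Llm}c_T(\by)\bigr)s_\gm(x)$.

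The heart of the argument is then a sign-reversing (here, content-preserving weight-preserving) involution of Bender–Knuth type. For each fixed value of $c_T(\by)$ — equivalently, for each fixed configuration of barred entries together with their positions — the unbarred entries form a genuine skew semistandard filling of the complementary shape, and the map $T\mapsto x^{\go(T)}$ ranges over a Schur-positive generating function. I would apply the standard Bender–Knuth involutions on the unbarred entries to show that, for each fixed barred skeleton, the sum over completions by unbarred entries equals a single Schur function $s_\gm(x)$ exactly when the unbarred column word is Yamanouchi, and cancels otherwise. Concretely, the involution pairs a non-Yamanouchi tableau with another of the same unbarred content and the same $c_T(\by)$ but opposite contribution, so all non-Yamanouchi terms cancel; the surviving terms are indexed by $\Llm$, each contributing $c_T(\by)s_\gm(x)$.

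The main obstacle will be verifying that the Bender–Knuth switching operation respects the presence of barred entries: the involution must act only on the unbarred letters while leaving every barred entry — and hence $c_T(\by)$ — completely fixed, and it must preserve both the row-weakly-increasing and column-strictly-increasing conditions when barred entries are interleaved among the unbarred ones. Because bars are ignored when testing these monotonicity conditions, a naive switch of unbarred $k$'s and $(k+1)$'s could collide with an intervening barred $k$ or $\ol{k+1}$; the careful point is to define the involution on maximal ``free'' segments of unbarred entries in each pair of adjacent rows, skipping over barred entries, and to check that this still yields a well-defined fixed-point-free pairing on the non-Yamanouchi tableaux. I expect this compatibility check — showing the classical involution localizes correctly to the unbarred subword while the barred positions and their $\by$-weights are untouched — to be the technical crux, after which the identification of the fixed points with $\Llm$ and the collapse to $s_\gm(x)$ follows as in Stembridge's original argument.
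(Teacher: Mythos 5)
Your overall strategy --- expanding each binomial factor to produce barred skew tableaux, symmetrizing, and cancelling non-Yamanouchi terms with Bender--Knuth-type involutions --- is exactly the paper's (Stembridge-style) route, and your first step, $s_{\bgl}\xyb=\sum_{T\in\Bl}c_T(\by)\,x^{\go(T)}$, is correct and matches the opening computation in the proof of Lemma \ref{l.lr}. The gap is in the involution itself. You require it to ``act only on the unbarred letters while leaving every barred entry --- and hence $c_T(\by)$ --- completely fixed,'' and you propose to handle collisions by restricting to free segments of unbarred entries and skipping over barred ones. This cannot work: the involution must satisfy $\go(s_iT)=\gs_i\go(T)$ (otherwise both the symmetrization identity and the cancellation of non-Yamanouchi terms fail), and there are tableaux for which no bar-fixing map achieves this. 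For instance, if a column contains $\ol{i}$ directly above an unbarred $i+1$ and these are the only entries of values $i$ and $i+1$, then $\go(T)$ has $\gx_i=0$ and $\gx_{i+1}=1$, so $s_iT$ must contain one unbarred $i$ and no unbarred $i+1$; keeping the $\ol{i}$ in place forces an unbarred $i$ below it in the same column, violating column-strictness. Nor is there a ``complementary skew shape'' to retreat to: once bars are interleaved, the unbarred entries do not form an independent semistandard filling, and fixing the barred skeleton does not decouple the two.

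The paper's resolution, which your proposal is missing, is to let the bar migrate while preserving the diagonal index $a+c(a)-r(a)$ of every barred entry, so that $c_T(\by)$ is preserved even though the set of barred positions is not. For a semi-free column pair the bar is transferred vertically (Algorithm 1), turning $\ol{i}$-over-$(i+1)$ into $i$-over-$\ol{i+1}$; the weight is unchanged because $i+c-r=(i+1)+c-(r+1)$. In the free-segment rearrangement (Algorithm 2), each $\ol{i+1}$ that becomes an $\ol{i}$ is then shifted one column to the right (and symmetrically in the case $l>r$), again preserving $a+c(a)-r(a)$. This is precisely the content of Lemma \ref{l.transp_properties}, and it is the one genuinely new idea beyond Stembridge's argument; without it your involution is not well defined on all of $\Bl$. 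Once it is in place, the two cancellation steps you sketch go through as in Lemmas \ref{l.lr} and \ref{l.bad_guys_vanish}.
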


\begin{ex}\label{ex.lr_rule} Let $n=2$, $\bgl=((2,1),(1,1))$,
and $\gm=(2,2)$. We list all $T\in\Llm$, and for each $T$ we
give $c_T(\by)$:\\[.2em]
\begin{center}
$
\begin{array}{c@{\hspace{3.5em}}c@{\hspace{3.5em}}c@{\hspace{3.5em}}c}
\psset{unit=.5cm,linewidth=.02} \pspicture(0,-1.5)(3,4)
\psline(0,2)(1,2) \psline(0,1)(1,1) \psline(0,0)(1,0)
\psline(0,0)(0,2) \psline(1,0)(1,2)
\rput{0}(.5,.5){$2$}
\rput{0}(.5,1.5){$1$}

\psset{linecolor=ncyn} \psframe*(1,3)(2,4) \psset{linecolor=black}
\psline(1,4)(3,4) \psline(1,3)(3,3) \psline(1,2)(2,2)
\psline(1,2)(1,4) \psline(2,2)(2,4) \psline(3,3)(3,4)
\rput{0}(1.5,2.5){$2$} \rput{0}(1.5,3.5){$\ol{1}$}
\rput{0}(2.5,3.5){$1$}
\rput{0}(1.5,-.8){$c_T(\by)=\yo_1$}
\endpspicture
&

\psset{unit=.5cm,linewidth=.02} \pspicture(0,-1.5)(3,4)
\psline(0,2)(1,2) \psline(0,1)(1,1) \psline(0,0)(1,0)
\psline(0,0)(0,2) \psline(1,0)(1,2)
\rput{0}(.5,.5){$2$}
\rput{0}(.5,1.5){$1$}

\psset{linecolor=ncyn} \psframe*(2,3)(3,4) \psset{linecolor=black}
\psline(1,4)(3,4) \psline(1,3)(3,3) \psline(1,2)(2,2)
\psline(1,2)(1,4) \psline(2,2)(2,4) \psline(3,3)(3,4)
\rput{0}(1.5,2.5){$2$} \rput{0}(1.5,3.5){$1$}
\rput{0}(2.5,3.5){$\ol{1}$}
\rput{0}(1.5,-.8){$c_T(\by)=\yo_2$}
\endpspicture
&

\psset{unit=.5cm,linewidth=.02} \pspicture(0,-1.5)(3,4)
\psline(0,2)(1,2) \psline(0,1)(1,1) \psline(0,0)(1,0)
\psline(0,0)(0,2) \psline(1,0)(1,2)
\rput{0}(.5,.5){$2$}
\rput{0}(.5,1.5){$1$}

\psset{linecolor=ncyn} \psframe*(2,3)(3,4) \psset{linecolor=black}
\psline(1,4)(3,4) \psline(1,3)(3,3) \psline(1,2)(2,2)
\psline(1,2)(1,4) \psline(2,2)(2,4) \psline(3,3)(3,4)
\rput{0}(1.5,2.5){$2$} \rput{0}(1.5,3.5){$1$}
\rput{0}(2.5,3.5){$\ol{2}$}
\rput{0}(1.5,-.8){$c_T(\by)=\yo_3$}
\endpspicture
&

\psset{unit=.5cm,linewidth=.02} \pspicture(0,-1.5)(3,4)
\psset{linecolor=ncyn} \psframe*(0,1)(1,2) \psset{linecolor=black}
\psline(0,2)(1,2) \psline(0,1)(1,1) \psline(0,0)(1,0)
\psline(0,0)(0,2) \psline(1,0)(1,2)
\rput{0}(.5,.5){$2$}
\rput{0}(.5,1.5){$\ol{1}$}

\psline(1,4)(3,4) \psline(1,3)(3,3) \psline(1,2)(2,2)
\psline(1,2)(1,4) \psline(2,2)(2,4) \psline(3,3)(3,4)
\rput{0}(1.5,2.5){$2$} \rput{0}(1.5,3.5){$1$}
\rput{0}(2.5,3.5){$1$}
\rput{0}(1.5,-.8){$c_T(\by)=\yt_1$}
\endpspicture
\end{array}
$
\end{center}
By Theorem \ref{t.lr_rule}, $c\blm(\by)=\yo_1+\yo_2+\yo_3+\yt_1$.
\end{ex}

\begin{rem}\label{rem.specialize_to_schur}
If $\yi$ is specialized to $(0,0,\ldots)$ for some $i$, then
$s_{\gl^{(i)}}\xyi=s_{\gl^{(i)}}(x)$. In this case, if $T\in\Llm$
has a barred entry in Young diagram $\gl^{(i)}$, then
$c_T(\by)=0$. Thus in Theorem \ref{t.lr_rule}, $c\blm(\by)$ is
computed by summing over only those $T\in\Llm$ with no barred
entries in Young diagram $\gl^{(i)}$.
\end{rem}

\begin{rem} For simplicity, we assume here that $y:=\yo=\cdots=\yr$,
and we denote $\by$ by just $y$. Theorem \ref{t.lr_rule} has the
following representation-theoretic interpretation. Let
$G=GL_n(\bC)$, and let $H=\bC^*\times\bC^*\times\cdots$. For
$\gl\in\cP_n$, let $V_\gl$ denote the irreducible $G$
representation of highest weight $\gl$. For
$m\in\bZ[y]=\bZ[y_1,y_2,\ldots]$ a monomial with coefficient $1$,
let $L_m$ denote the $H$ representation with character $m$. For
$\gl\in\cP_n$, define the following $G\times H$ representation:
\begin{equation*}
U_\gl=\bigoplus_{\gm\in\cP_n\atop T\in\cL\cR_{(\gl)}^\gm}  V_\gm\otimes L_{c_T(y)}.
\end{equation*}
By Theorem \ref{t.lr_rule}, $\ch_{G\times
H}U_\gl=\sum_{\gm\in\cP_n,\, T\in\cL_{(\gl)}^\gm}
s_\gm(x)c_T(y)=s_\gm\xy$.

Let $R(G\times H)$ denote the polynomial representation ring of
$G\times H$ (i.e., the subring of the full representation ring of
$G\times H$ generated by the polynomial representations) and
$R(H)$ the polynomial representation ring of $H$.  Then $R(G\times
H)\cong \bZ[x]^{S_n}\otimes\bZ[y]=\bZ[x,y]^{S_n}$, and
$R(H)\cong\bZ[y]$. Since $\{s_\gl\xy\mid\gl\in\cP_n\}$ forms a
$\bZ[y]$-basis for $\bZ[x,y]^{S_n}$, the classes
$\{[U_\gl]\mid\gl\in\cP_n\}\subset R(G\times H)$ form an
$R(H)$-basis for $R(G\times H)$. Theorem \ref{t.lr_rule} implies
the following decomposition as $G\times H$ representations:
\begin{equation*}
U_{\gl^{(1)}}\otimes\cdots\otimes U_{\gl^{(t)}}=
\bigoplus_{\gm\in\cP_n\atop T\in\cL\cR\blm} V_\gm\otimes L_{c_T(y)}.
\end{equation*}
\end{rem}

\begin{rem}
Factorial Schur functions can be defined in terms of reverse Young
tableaux instead of Young tableaux (see \cite{Kr2}).  Beginning
with this definition, and with several minor adjustments to the
proofs, one can obtain a rule for $c\blm(\by)$ which is almost the
same as that of Theorem \ref{t.lr_rule}, except that it is
expressed in terms of reverse barred skew tableaux instead of
barred skew tableaux, with appropriate adjustments to the indexing
of columns and rows. This rule is similar in form to the factorial
Littlewood-Richardson rule of Molev \cite{Mo1} and Kreiman
\cite{Kr2}, which is equivalent to the Knutson-Tao rule
\cite{Kn-Ta}.
\end{rem}

\section{Generalization of Stembridge's Proof}

In this section we prove Theorem \ref{t.lr_rule}. The underlying
structure and logic of our proof follows Stembridge \cite{Stem1}.
Our approach is to generalize Stembridge's methods
from skew tableaux to barred skew tableaux.

For $\gx=(\gx_1,\ldots,\gx_n)\in\bN^n$, define
$a_\gx(x)=\det[(x_i)^{\gx_j}]_{1\leq i,j\leq n}$. Let
$\gr=(n-1,n-2,\ldots,0)\in\cP_n$.
\begin{lem}\label{l.lr}  $a_{\gr}(x)s_{\bgl}\xyb
=\sum_{T\in\Bl}c_T(\by) a_{\gr+\go(T)}(x)$.
\end{lem}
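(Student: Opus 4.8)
The plan is to prove the lemma in two independent steps: first expand $s_{\bgl}\xyb$ as an explicit $\bZ[\by]$-linear combination of $x$-monomials indexed by barred skew tableaux, and then multiply by $a_\gr(x)$ using that $a_\gr$ is an antisymmetrization. First I would expand each factor. By definition $s_{\gl^{(i)}}\xyi=\sum_{T_0}\prod_a\bigl(x_a+\yi_{a+c(a)-r(a)}\bigr)$, the sum over ordinary Young tableaux $T_0$ of shape $\gl^{(i)}$. Expanding the product over boxes amounts to choosing in each box either the term $x_a$ or the term $\yi_{a+c(a)-r(a)}$; recording the latter choice by barring the box turns each pair (tableau, choice) into a barred tableau of shape $\gl^{(i)}$, and conversely. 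Hence $s_{\gl^{(i)}}\xyi=\sum_T(\prod_{a\text{ unbarred}}x_a)(\prod_{a\text{ barred}}\yi_{a+c(a)-r(a)})$, summed over barred tableaux $T$ of shape $\gl^{(i)}$. Because the diagrams $\gl^{(1)},\ldots,\gl^{(r)}$ are placed successively below and to the left, so that no two of them share a row or a column, a barred skew tableau of shape $\bgl$ is nothing but an $r$-tuple of barred tableaux, one of each shape $\gl^{(i)}$, with row and column indices read inside the individual diagrams. Taking the product of the $r$ expansions, and noting that unbarred contents add while the $\by$-weights multiply, I obtain $s_{\bgl}\xyb=\sum_{T\in\Bl}c_T(\by)\,x^{\go(T)}$, where $x^{\go(T)}=\prod_k x_k^{\gx_k}$ for $\go(T)=(\gx_1,\ldots,\gx_n)$.

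For the second step, let $\mathcal{A}=\sum_{\sigma\in S_n}\sgn(\sigma)\,\sigma$ denote antisymmetrization, where $\sigma$ permutes the variables $x_1,\ldots,x_n$. Directly from the determinant definition one has $a_\gx(x)=\mathcal{A}(x^\gx)$ for every $\gx\in\bN^n$. Since $\mathcal{A}(g\cdot f)=g\cdot\mathcal{A}(f)$ whenever $g$ is symmetric in the $x$-variables, and since $s_{\bgl}\xyb$ is symmetric in $x$ (each factor lies in $\bZ[x,\by]^{S_n}$, as noted in the introduction) while each $c_T(\by)$ is free of $x$, I may compute
\[
a_\gr(x)\,s_{\bgl}\xyb=\mathcal{A}(x^\gr)\,s_{\bgl}\xyb=\mathcal{A}\!\left(x^\gr s_{\bgl}\xyb\right)=\mathcal{A}\!\left(\sum_{T\in\Bl}c_T(\by)\,x^{\gr+\go(T)}\right)=\sum_{T\in\Bl}c_T(\by)\,a_{\gr+\go(T)}(x),
\]
which is exactly the asserted identity.

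The only real care is in the first step: one must verify that the staggered placement in $\bgl$ introduces no order comparisons between boxes of different factors, so that barred skew tableaux of shape $\bgl$ correspond bijectively to $r$-tuples of barred single-shape tableaux, and that the subscript $a+c(a)-r(a)$ of each $\by$-variable is computed with the \emph{local} row and column numbers of the factor $\gl^{(i(a))}$ in which the box sits. No cancellation is needed in the lemma itself: when $\gr+\go(T)$ has a repeated part the determinant $a_{\gr+\go(T)}(x)$ simply vanishes, and when it is unsorted it differs by a sign from the associated partition alternant, both being automatic consequences of the determinant definition of $a_\gx$. The genuine combinatorial work — the sign-reversing involutions that collapse the sum over all of $\Bl$ down to the Yamanouchi tableaux $\Llm$ — is reserved for deducing Theorem \ref{t.lr_rule} from this lemma and is not part of its proof.
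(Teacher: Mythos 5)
Your proof is correct, and the first step (expanding $s_{\bgl}\xyb=\sum_{T\in\Bl}c_T(\by)x^{\go(T)}$ by interpreting the choice of $x$-term versus $\by$-term in each box as a barring) is exactly the paper's. Where you diverge is the multiplication step: you invoke the symmetry of $s_{\bgl}\xyb$ in the $x$-variables and use the standard identity $\mathcal{A}(gf)=g\,\mathcal{A}(f)$ for the antisymmetrizer $\mathcal{A}=\sum_{\gs}\sgn(\gs)\gs$, whereas the paper never assumes symmetry here. Instead it re-indexes the double sum $\sum_{\gs}\sum_{T}\sgn(\gs)x^{\gs(\gr)}x^{\go(T)}c_T(\by)$ by replacing $T$ with $\gs T$, where $\gs T$ is built from the Bender--Knuth-type involutions $s_i$ on barred skew tableaux, and then uses $c_{\gs T}(\by)=c_T(\by)$ and $\go(\gs T)=\gs\go(T)$ (equation (\ref{e.perm_properties})) to collect $x^{\gs(\gr+\go(T))}$. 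The two arguments carry essentially the same content --- the existence of involutions satisfying (\ref{e.perm_properties}) is precisely what proves that $\sum_T c_T(\by)x^{\go(T)}$ is symmetric in $x$ --- but the paper's version is self-contained: it derives the identity from the combinatorics rather than importing the (nontrivial, though standard and asserted in the introduction) fact that factorial Schur functions are symmetric. Your route buys a shorter proof of this lemma at the cost of an external input; since the involutions $s_i$ are needed anyway for Lemma \ref{l.bad_guys_vanish}, the paper loses nothing by using them here too. If you keep your version, you should state explicitly that the symmetry of $s_\gl\xyi$ is being taken as known from the references, since it is not evident from the tableau definition.
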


\begin{lem}\label{l.bad_guys_vanish} $\sum c_T(\by)
a_{\gr+\go(T)}(x)=0$, where the sum is over all $T\in\Bl$ such
that the unbarred column word of $T$ is not Yamanouchi.
\end{lem}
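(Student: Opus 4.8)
The plan is to construct a weight-preserving, sign-reversing involution on the set of non-Yamanouchi barred skew tableaux in $\Bl$. Recall that $a_\gx(x)$ is alternating in the entries of $\gx$, so that $a_{s_k\gx}(x)=-a_\gx(x)$ for the transposition $s_k$ swapping $\gx_k$ and $\gx_{k+1}$, and $a_\gx(x)=0$ whenever $\gx$ has a repeated entry. Hence it suffices to produce an involution $T\mapsto T'$ on the bad tableaux with $c_{T'}(\by)=c_T(\by)$ and $\gr+\go(T')=s_k\bigl(\gr+\go(T)\bigr)$ for a suitable index $k$: every two-element orbit then contributes $c_T(\by)\bigl(a_{\gr+\go(T)}(x)+a_{\gr+\go(T')}(x)\bigr)=0$, while any fixed point must satisfy $(\gr+\go(T))_k=(\gr+\go(T))_{k+1}$ and so has $a_{\gr+\go(T)}(x)=0$. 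Writing $\go(T)=(\gx_1,\dots,\gx_n)$ and $\gr=(n-1,\dots,0)$, the requirement $\gr+\go(T')=s_k(\gr+\go(T))$ translates into the content move $(\gx_k,\gx_{k+1})\mapsto(\gx_{k+1}-1,\gx_k+1)$; thus the involution must alter only the numbers of unbarred $k$'s and $(k+1)$'s, and must do so by exactly this shift.

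To define $T'$, let $k=k(T)$ be the smallest value for which some initial segment of the unbarred column word of $T$ contains more $(k+1)$'s than $k$'s; such a $k$ exists precisely because $T$ is not Yamanouchi. Following Stembridge, I would bracket-match the unbarred $k$'s and $(k+1)$'s along the unbarred column word (reading each $(k+1)$ as a right bracket and each $k$ as a left bracket, and matching each right bracket to the nearest unmatched left bracket preceding it), then freeze all matched entries together with every barred entry and every entry of value $\ne k,k+1$, acting only on the unmatched unbarred $k$'s and $(k+1)$'s. By minimality of $k$ there is at least one unmatched $(k+1)$, and the unmatched letters occur in the word as $(k+1)^{p}k^{q}$ with $p\ge 1$; the involution replaces them by $(k+1)^{q+1}k^{p-1}$. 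Since every barred entry is left untouched, $c_{T'}(\by)=c_T(\by)$ directly from the definition of $c_T(\by)$, and a short count shows this realizes precisely the content shift above. Applying the rule a second time returns the pair $(p,q)$, so the map is an involution, whose fixed points are exactly the tableaux with $p=q+1$, i.e.\ those with $(\gr+\go(T))_k=(\gr+\go(T))_{k+1}$.

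The crux of the argument, and the point at which it genuinely departs from Stembridge's, is to verify that toggling the unmatched entries yields a \emph{valid} barred skew tableau even though barred entries (in particular $\ol{k}$ and $\ol{k+1}$, which count as values $k$ and $k+1$ for the monotonicity conditions) may be interspersed among them. The key structural claim to establish is that an unmatched unbarred $(k+1)$ never has an entry of value $k$ immediately above it in its column, and an unmatched unbarred $k$ never has an entry of value $k+1$ immediately below it, so that relabelling preserves strict increase down columns and weak increase across rows; setting up the bracket matching so that it interacts correctly with the bars is the main obstacle. One must then check well-definedness and self-inverseness by confirming that $k(T')=k(T)$: the switch does not move the first violation away from level $k$, and, although it changes the number of $k$'s in certain prefixes, it introduces no violation at any level below $k$. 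Granting these verifications, the bad tableaux cancel in pairs, the fixed points contribute $0$, and the stated sum vanishes.
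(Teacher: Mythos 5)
Your global strategy---pair the non-Yamanouchi tableaux by a content-swapping, $c_T$-preserving involution so that the alternants cancel, with fixed points contributing $0$ because $\gr+\go(T)$ has a repeated entry---is exactly the paper's. But the mechanism you propose is different and, as written, has a genuine gap at the point you yourself flag as the crux. The paper does not perform a global bracket matching on the unbarred column word: it picks the maximal column $j$ with $\go(T_{\geq j})\notin\cP_n$ and the minimal $i$ with $\go(T_{\geq j})_i<\go(T_{\geq j})_{i+1}$, observes that $(\gr+\go(T_{\geq j}))_i=(\gr+\go(T_{\geq j}))_{i+1}$, and then applies a barred Bender--Knuth involution $s_i$ (Algorithms 1 and 2) only to $T_{<j}$, leaving $T_{\geq j}$ untouched. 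This localization is what makes well-definedness and self-inverseness immediate, since the data $(j,i)$ is computed from the part of the tableau that does not change.

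The step you defer---``an unmatched unbarred $k+1$ never has an entry of value $k$ immediately above it''---is false for the natural matching, precisely because barred entries are skipped by the unbarred column word and hence invisible to the matching, yet they still count for the monotonicity conditions (a barred skew tableau must increase weakly along rows and strictly down columns ``without regard to whether or not the entries are barred''). An unmatched unbarred $k+1$ can perfectly well have an $\ol{k}$ directly above it in its column; relabelling it as $k$ then puts two entries of value $k$ in one column, which is not a barred skew tableau. The symmetric failure occurs for an unmatched unbarred $k$ with an $\ol{k+1}$ below it, and analogous problems arise along rows where $\ol{k}$'s and $\ol{k+1}$'s are interleaved among the entries you want to toggle. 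So ``setting up the bracket matching so that it interacts correctly with the bars'' is not a verification to be granted; it is the entire content of the lemma in the barred setting. The paper's $s_i$ resolves it by moving the bar within a column pair having exactly one barred member (a ``semi-free'' pair) rather than freezing all bars, and by a careful re-interleaving of barred and unbarred entries within each maximal free string of a row (Algorithm 2); you would need to supply something equivalent, and then re-verify that $c_{T'}(\by)=c_T(\by)$, which is no longer automatic once bars are allowed to move.
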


\ni The proofs of these two lemmas appear at the end of this
section.  The following three corollaries are easy consequences.

\begin{cor}\label{c.lr1}
$a_{\gr}(x)s_{\bgl}\xyb =\sum c_T(\by) a_{\gr+\go(T)}(x)$, where
the sum is over all $T\in\Bl$ such that the unbarred column word
of $T$ is Yamanouchi.
\end{cor}
\ni If we set $r=1$, $\bgl=(\gl)$, and $\by=((0,0,\ldots))$ in
Corollary \ref{c.lr1}, then $s_{\bgl}\xyb=s_\gl(x)$, and there is
a unique $T\in\Bl$ with no barred entries (so that $c_T(\by)\neq
0$; see Remark \ref{rem.specialize_to_schur}) whose unbarred
column word is Yamanouchi: namely, the tableau whose $i$-th row
contains all $i$'s.  For this $T$, $c_T(\by)=1$ and $\go(T)=\gl$.
Thus we obtain the well known bialternant formula for the Schur
function.
\begin{cor}\label{c.lr2}
$\DS s_\gl(x)=a_{\gr+\gl}(x)/a_\gr(x)$.
\end{cor}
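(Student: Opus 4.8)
The plan is to derive Corollary \ref{c.lr2} as the most degenerate special case of Corollary \ref{c.lr1}. First I would set $r=1$, $\bgl=(\gl)$, and $\by=((0,0,\ldots))$ in Corollary \ref{c.lr1}. Under this specialization the left-hand side $a_\gr(x)\,s_{\bgl}\xyb$ becomes $a_\gr(x)\,s_\gl(x)$, and the skew diagram $\bgl$ is now just the single Young diagram $\gl$, so the tableaux $T\in\Bl$ are the barred fillings of $\gl$.

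Next I would prune the right-hand sum. By Remark \ref{rem.specialize_to_schur}, any $T$ possessing a barred entry contributes $c_T(\by)=0$ once $\yo=(0,0,\ldots)$, so only tableaux with no barred entries survive; for each of these the product defining $c_T(\by)$ is empty and hence equals $1$. A barred filling of $\gl$ with no barred entries is precisely a semistandard Young tableau of shape $\gl$ with entries in $\{1,\ldots,n\}$, so the sum collapses to $\sum_T a_{\gr+\go(T)}(x)$ taken over those semistandard tableaux of shape $\gl$ whose (unbarred) column word is Yamanouchi.

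The one substantive point, and the step I expect to be the main obstacle, is to show that exactly one such tableau exists, namely the tableau whose $i$-th row consists entirely of $i$'s, and that it has $\go(T)=\gl$. I would argue as follows. The column word is read starting at the top of the rightmost column, so the Yamanouchi condition applied to the length-one prefix forces that first entry to be $1$, and applied to longer prefixes it forces the content $\go(T)$ to be a partition $\gm\in\cP_n$. By the standard fact that the semistandard tableaux of shape $\gl$, fixed content $\gm$, and lattice reading word number exactly $\gd_{\gl\gm}$ (these are the highest-weight tableaux), summing over $\gm\in\cP_n$ leaves a single tableau, of content $\gl$. This is the row-constant tableau described above, for which one checks directly that reading each column top to bottom increments the counts in the order $1,2,3,\ldots$, so that the column word is Yamanouchi and $\go(T)=\gl$.

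With this in hand the right-hand side reduces to the single term $a_{\gr+\gl}(x)$, giving $a_\gr(x)\,s_\gl(x)=a_{\gr+\gl}(x)$. Since $a_\gr(x)=\det[x_i^{\,n-j}]$ is the Vandermonde determinant, a nonzero element of the integral domain $\bZ[x]$, I would divide by it to obtain $s_\gl(x)=a_{\gr+\gl}(x)/a_\gr(x)$, as claimed.
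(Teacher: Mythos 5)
Your proposal is correct and follows essentially the same route as the paper: specialize Corollary \ref{c.lr1} to $r=1$, $\bgl=(\gl)$, $\by=((0,0,\ldots))$, observe via Remark \ref{rem.specialize_to_schur} that only unbarred tableaux survive, and identify the unique Yamanouchi tableau as the row-constant one with content $\gl$. The only difference is that you supply a justification for that uniqueness (which the paper simply asserts); just note that the paper's ``unbarred column word'' is read by columns rather than rows, so the cleanest check is the direct inductive one you sketch at the end rather than the citation of the row-reading-word version of the standard fact.
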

\ni Dividing both sides of Corollary \ref{c.lr1} by $a_\gr(x)$ and
applying Corollary \ref{c.lr2} yields
\begin{cor}\label{c.lr3}
$s_{\bgl}\xyb =\sum c_T(\by) s_{\go(T)}(x)$, where the sum is over
all $T\in\Bl$ such that the unbarred column word of $T$ is
Yamanouchi.
\end{cor}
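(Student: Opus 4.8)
The plan is to obtain Corollary \ref{c.lr3} from the antisymmetrized identity already recorded in Lemma \ref{l.lr}, so that the only genuine work lies in the cancellation of the non-Yamanouchi terms (Lemma \ref{l.bad_guys_vanish}); the passage from alternants to Schur functions and the final division are then formal. Lemma \ref{l.lr} expresses $a_\gr(x)s_{\bgl}\xyb=\sum_{T\in\Bl}c_T(\by)\,a_{\gr+\go(T)}(x)$ as a sum over \emph{all} of $\Bl$. Once Lemma \ref{l.bad_guys_vanish} discards the tableaux whose unbarred column word is not Yamanouchi, Corollary \ref{c.lr1} leaves a sum over Yamanouchi $T$ only. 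I would then invoke the bialternant identity of Corollary \ref{c.lr2} in the form $a_{\gr+\go(T)}(x)=a_\gr(x)\,s_{\go(T)}(x)$ and divide through by the Vandermonde determinant $a_\gr(x)=\prod_{i<j}(x_i-x_j)$, a nonzero element of the integral domain $\bZ[x,\by]$; this cancellation is legitimate and yields $s_{\bgl}\xyb=\sum c_T(\by)\,s_{\go(T)}(x)$ summed over Yamanouchi $T$, as claimed.

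The heart of the argument is therefore the sign-reversing involution underlying Lemma \ref{l.bad_guys_vanish}, which I would build by generalizing the Bender--Knuth \cite{Be-Kn} switch used in Stembridge's proof \cite{Stem1} from ordinary to barred skew tableaux. Given $T\in\Bl$ whose unbarred column word is not Yamanouchi, read that word and locate the first prefix at which, for some smallest value $k$, the number of unbarred $(k{+}1)$'s exceeds the number of unbarred $k$'s. I would then apply a switch affecting only the unbarred $k$- and $(k{+}1)$-entries in the region of $T$ singled out by this violation, leaving every barred entry and every unbarred entry different from $k,k{+}1$ exactly in place, producing a tableau $T'\in\Bl$ with the same first offending value $k$. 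The switch is designed so that $\gr+\go(T')=s_k\!\left(\gr+\go(T)\right)$, where $s_k$ transposes coordinates $k$ and $k{+}1$; since $a_\gx(x)=\det[(x_i)^{\gx_j}]$ is alternating in its exponent vector, this forces $a_{\gr+\go(T')}(x)=-\,a_{\gr+\go(T)}(x)$. Because $c_T(\by)=\prod_{a\text{ barred}}y^{(i(a))}_{a+c(a)-r(a)}$ depends only on the barred entries, all of which the switch fixes, we have $c_{T'}(\by)=c_T(\by)$. Hence the two terms indexed by $T$ and $T'$ cancel, and the non-Yamanouchi part of the sum vanishes; any tableau fixed by the involution satisfies $(\gr+\go(T))_k=(\gr+\go(T))_{k+1}$, so its alternant $a_{\gr+\go(T)}(x)$ is already zero and contributes nothing.

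The main obstacle is precisely this adaptation of Bender--Knuth to the barred setting: I must verify that restricting the switch to the unbarred $k,k{+}1$ entries, with every barred cell treated as an inert obstacle, still (i) outputs a legitimate element of $\Bl$ — rows weakly increasing, columns strictly increasing, disregarding bars — (ii) is an involution carrying non-Yamanouchi tableaux to non-Yamanouchi tableaux while preserving the offending value $k$, and (iii) realizes exactly the transposition $s_k$ on $\gr+\go(T)$ while fixing all barred content. The interspersed barred entries threaten the pairing of ``free'' and ``bound'' $k$'s and $(k{+}1)$'s on which the classical switch relies, so the bookkeeping that guarantees (i)--(iii) is the step requiring real care; everything else is the formal descent from Lemma \ref{l.lr} through Corollaries \ref{c.lr1} and \ref{c.lr2} described above.
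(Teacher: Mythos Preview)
Your high-level plan --- deduce Corollary~\ref{c.lr1} from Lemmas~\ref{l.lr} and~\ref{l.bad_guys_vanish}, then divide by $a_\gr(x)$ and invoke the bialternant formula of Corollary~\ref{c.lr2} --- is exactly what the paper does, and that part is fine.

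The gap is in your sketch of the involution behind Lemma~\ref{l.bad_guys_vanish}. You assert that the switch can be arranged to leave \emph{every barred entry exactly in place}, and you deduce $c_{T'}(\by)=c_T(\by)$ from this. That design is not achievable. Consider a single free row segment $k,\ol{k}$ (no column constraints). To transpose the unbarred $(k,k{+}1)$-counts you must convert the lone unbarred $k$ into an unbarred $k{+}1$; if the $\ol{k}$ is held fixed the row becomes $k{+}1,\ol{k}$, violating weak increase. More generally, barred entries interleaved with the unbarred $k$'s and $(k{+}1)$'s cannot all be treated as inert obstacles: doing so either breaks semistandardness or prevents the content from being transposed.

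The paper's involution $s_i$ resolves this by \emph{moving} bars. It distinguishes free, semi-free, and locked $\{i,i{+}1\}$-entries; on a semi-free column pair it transfers the bar from one entry to the other, and on a free row segment it relabels and then slides each barred entry one box (so $\ol{i}$ in column $c{+}1$ becomes $\ol{i{+}1}$ in column $c$, or vice versa). The invariance $c_{s_iT}(\by)=c_T(\by)$ holds not because barred cells are fixed, but because each such move preserves the quantity $a+c(a)-r(a)$, hence the factor $y^{(i(a))}_{a+c(a)-r(a)}$. This is the missing mechanism in your proposal; once you build $s_i$ this way, the rest of your argument goes through verbatim.
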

\ni Regrouping terms in the summation,
$ s_{\bgl}\xyb=\sum_{\gm\in\cP_n}\sum_{T\in\Llm} c_T(\by)
s_\gm(x)$. This proves Theorem \ref{t.lr_rule}.

\subsection*{Involutions on Barred Skew Tableaux}

The proofs of Lemmas \ref{l.lr} and \ref{l.bad_guys_vanish} rely
on involutions $s_1,\ldots,s_{n-1}$ on $\Bl$, which generalize the
involutions on Young tableaux introduced by Bender and Knuth
\cite{Be-Kn}. We now define these involutions and prove several of
their properties.

Let $T\in\Bl$, and let $i\in\{1,\ldots,n-1\}$ be fixed.  Let $a$
be an entry of $T$. Then either $a=j$ or $a=\ol{j}$, where
$j\in\{1,\ldots,n\}$. We call $j$ the \textbf{value} of $a$. We
say that an entry of $T$ of value $i$ or $i+1$ is \textbf{free} if
there is no entry of value $i+1$ or $i$ respectively in the same
column; \textbf{semi-free} if there is an entry of value $i+1$ or
$i$ respectively in the same column, and exactly one of the two is
barred; or \textbf{locked} if there is an entry of value $i+1$ or
$i$ respectively in the same column, and either both entries are
unbarred or both entries are barred. Note that any entry of value
$i$ or $i+1$ must be exactly one of these three types. In any row,
the free entries are consecutive. Semi-free entries come in pairs,
one below the other, as do locked entries.

The barred skew tableau $s_iT$ is obtained by applying
$\textbf{Alorithm 1}$ to each semi-free pair of entries in $T$,
and applying $\textbf{Algorithm 2}$ to each maximal string of
consecutive free entries $S$ lying on the same row. \vs

\begin{quote}
\noindent\textbf{Algorithm 1}\\ {\it\ni For a semi-free pair
consisting of two entries lying in the same column of $T$, the bar
is removed from the barred entry and placed on top of the unbarred
entry.}
\end{quote}

\begin{quote}
\noindent\textbf{Algorithm 2}\\
{\it\ni Let $l$ be the number of (unbarred) $i$'s and $r$ the
number of
(unbarred) $i+1$'s that $S$ contains.
\begin{itemize}
\item If $l=r$: Do not change $S$.

\item If $l<r$: If $l=0$ then define $R=S$.  Otherwise,
    letting $S_1$ be the $l$-th $i$ of $S$ from the left and
    $S_2$ the $l$-th $i+1$ of $S$ from the right, define $R$
    to be the string of consecutive entries of $S$ beginning
    with the first $i+1$ or $\ol{i+1}$ of $S$ to the right of
    $S_1$ and ending with the first $i+1$ of $S$ to the left
    of $S_2$. Note that each entry of $R$ is either an $i+1$
    or $\ol{i+1}$. Modify $R$ as follows:
    \begin{itemize}
    \item[1.] change each $i+1$ to an $i$ and each
        $\ol{i+1}$ to an $\ol{i}$; and then

    \item[2.] beginning with the rightmost $\ol{i}$ and
        then proceeding to the left, swap each $\ol{i}$
        with the $i$ immediately to the right of it.
    \end{itemize}

\item If $l>r$: If $r=0$ then define $R=S$. Otherwise, letting
    $S_1$ be the $r$-th $i$ of $S$ from the left and $S_2$ the
    $r$-th $i+1$ of $S$ from the right, define $R$ to be the
    string of consecutive entries of $S$ beginning with the
    first $i$ of $S$ to the right of $S_1$ and ending with the
    first $i$ or $\ol{i}$ of $S$ to the left of $S_2$. Note
    that each entry of $R$ is either an $i$ or an $\ol{i}$.
    Modify $R$ as follows:
    \begin{itemize}
    \item[1.] change each $i$ to an $i+1$ and each
        $\ol{i}$ to an $\ol{i+1}$; and then

    \item[2.] beginning with the leftmost $\ol{i+1}$ and
        then proceeding to the right, swap each $\ol{i+1}$
        with the $i+1$ immediately to the left of it.
    \end{itemize}
\end{itemize}
}
\end{quote}
\vs

\ni One checks that $s_i$ is an involution on $\Bl$. Let $S_n$
denote the permutation group on $n$ elements and $\gs_i$ the
simple transposition of $S_n$ which exchanges $i$ and $i+1$. The
following Lemma follows from the construction of $s_i$.

\begin{lem}\label{l.transp_properties} Let $T\in\Bl$.  Then

\ni (i) $c_{s_iT}(\by)= c_T(\by)$.

\ni (ii) $\go(s_iT)=\gs_i \go(T)$.
\end{lem}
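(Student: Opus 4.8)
The plan is to prove Lemma \ref{l.transp_properties} by tracking how the two statistics $c_T(\by)$ and $\go(T)$ change under each local modification that defines $s_i$. Since $s_iT$ is built by applying Algorithm 1 to every semi-free pair and Algorithm 2 to every maximal free string independently, and these statistics are products/sums over entries, it suffices to verify each assertion one local modification at a time. For part (i), I would observe that $c_T(\by)$ is the product over the \emph{barred} entries $a$ of $T$ of $y^{(i(a))}_{a+c(a)-r(a)}$, where the exponent factor depends only on the position (the Young diagram index $i(a)$, the column $c(a)$, and the row $r(a)$) and on the \emph{value} of the entry $a$ at that position. So I must check that each algorithm, when it alters the bars and/or values in $T$, preserves the multiset of contributions $y^{(i(a))}_{a+c(a)-r(a)}$ summed over barred boxes.

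For Algorithm 1, a semi-free pair occupies two boxes in the same column, one holding value $i$ and the other value $i+1$, with exactly one barred; the algorithm moves the bar from the barred box to the unbarred box. I would note that the box that \emph{was} barred loses its factor while the box that \emph{becomes} barred gains one, so I must confirm these two factors agree. The two boxes lie in the same column of $T$, hence in the same Young diagram (same $i(a)$) and same column number; their row numbers differ by exactly one, and correspondingly their values differ by exactly one ($i$ versus $i+1$), so the quantity ``value $+\,c(a)-r(a)$'' is identical for the two boxes. Thus the subscript on $y$ is unchanged, the superscript is unchanged, and $c_T(\by)$ is preserved. For Algorithm 2, which acts on free entries (no partner of the complementary value in the same column), I would check that the barred entries of $R$ are permuted among boxes within a single row in a way that pairs each old barred box with a new barred box carrying the \emph{same} value of ``value $+\,c(a)-r(a)$''; here $r(a)$ is constant along the row, and each swap in step 2 moves an $\ol{i}$ (or $\ol{i+1}$) one column while the value-change in step 1 compensates, so again the factors match up. This is the step I expect to require the most care, since the bookkeeping of which barred boxes map to which is intricate, but the underlying invariant is that each swap exchanges the column occupied by a bar against a unit change in value, leaving ``value $+\,c(a)-r(a)$'' invariant.

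For part (ii), the claim $\go(s_iT)=\gs_i\go(T)$ asserts that $s_i$ exchanges the total number of unbarred $i$'s with the total number of unbarred $(i+1)$'s, while leaving all other unbarred counts fixed; since $\gs_i$ swaps the $i$-th and $(i+1)$-th coordinates, this is exactly the content. Algorithm 1 changes no values, only bars, so it does not affect $\go$ at all. Algorithm 2 is where the exchange happens: on a free string $S$ with $l$ unbarred $i$'s and $r$ unbarred $(i+1)$'s, I would verify by direct count that after the modification the string contains $r$ unbarred $i$'s and $l$ unbarred $(i+1)$'s. In the case $l<r$, step 1 turns the $(r-l)$ entries of $R$ that are unbarred $(i+1)$'s into unbarred $i$'s and the barred ones into $\ol{i}$'s; step 2 merely rearranges within $R$ and does not alter barred-versus-unbarred status or value multiplicities, so the net effect on \emph{unbarred} counts is to convert exactly the right number of $(i+1)$'s to $i$'s, and summing the unchanged entries outside $R$ gives the desired totals. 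The symmetric argument handles $l>r$, and $l=r$ leaves $S$ fixed. I expect the main obstacle to be the verification for Algorithm 2 in part (i): confirming that the row-swaps of step 2, combined with the value relabelling of step 1, induce a bijection on barred boxes preserving $y^{(i(a))}_{a+c(a)-r(a)}$. Part (ii), by contrast, reduces to a clean counting argument once one observes that step 2 is value- and bar-preserving.
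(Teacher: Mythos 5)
The paper gives no written proof of this lemma (it simply asserts that it ``follows from the construction of $s_i$''), so your local, algorithm-by-algorithm verification is the right thing to attempt, and your treatment of part (i) is correct: for Algorithm 1 the two members of a semi-free pair occupy adjacent rows of the same column of the same Young diagram, so the quantity $a+c(a)-r(a)$ agrees on the two boxes and moving the bar preserves the factor; for Algorithm 2 each swap in step 2 moves a barred box one column in the direction that exactly offsets the unit change of value from step 1, so $a+c(a)-r(a)$ is again invariant on barred boxes and the superscript $i(a)$ never changes.

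However, your argument for part (ii) contains a genuine error. You assert that ``Algorithm 1 changes no values, only bars, so it does not affect $\go$ at all.'' But $\go(T)$ is the \emph{unbarred} content: it counts only the unbarred entries, so moving a bar certainly changes it. A semi-free pair $(\ol{i},\,i+1)$ contributes $(0,1)$ to the coordinates $(\gx_i,\gx_{i+1})$ before Algorithm 1 and $(1,0)$ after; this asymmetric contribution must be exchanged for (ii) to hold, and it is Algorithm 1 --- not Algorithm 2 --- that exchanges it. Under your accounting, in which the entire swap of $\gx_i$ with $\gx_{i+1}$ is carried by Algorithm 2 acting on the free entries, the semi-free pairs' contributions would be left un-swapped and the claimed identity $\go(s_iT)=\gs_i\go(T)$ would fail whenever a semi-free pair is present (for instance, a column containing $\ol{i}$ above an unbarred $i+1$, with no other entries of value $i$ or $i+1$ in the tableau). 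The repair is immediate --- locked pairs contribute symmetrically and are untouched, semi-free pairs have their contributions swapped precisely by Algorithm 1's bar move, and each maximal free string has its $l$ unbarred $i$'s and $r$ unbarred $(i+1)$'s exchanged by Algorithm 2 as you verified --- but as written the step is wrong, not merely unfinished.
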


Let $T\in\Bl$ and let $\gs\in S_n$. Choose some decomposition of
$\gs$ into simple transpositions: $\gs=\gs_{i_1}\cdots\gs_{i_t}$,
and define $\gs T=s_{i_1}\cdots s_{i_t}T$. By Lemma
\ref{l.transp_properties},
\begin{equation}\label{e.perm_properties}c_{\gs T}(\by)=c_T(\by)\qquad\text{and}\qquad
\go(\gs T)=\gs\go(T).
\end{equation}
In particular, although $\gs T$ depends on the decomposition of
$\gs$, both $c_{\gs T}(\by)$ and $\go(\gs T)$ are independent of
the decomposition.

\subsection*{Proof of Lemmas \ref{l.lr} and \ref{l.bad_guys_vanish}}

\begin{proof}[Proof of Lemma \ref{l.lr}]
Expanding $s_{\bgl}\xyb$ into monomials:
\begin{align*}
s_{\bgl}\xyb&=\prod_{i=1}^r s_{\gl^{(i)}}\xyi\\
&=\prod_{i=1}^r \sum_{T\in \cT_{\gl^{(i)}}}
\prod_{a\in T}\left(x_{a}+y^{(i)}_{a+c(a)-r(a)}\right)\\
%
%
&=\prod_{i=1}^r \sum_{T\in \cB_{(\gl^{(i)})}}
\prod_{{a\in T\atop a\text{
unbarred}}}x_{a}\prod_{a\in T\atop a\text{
barred}}y^{(i)}_{a+c(a)-r(a)}\\
&=\sum_{T\in \cB_{\bgl}}
\prod_{{a\in T\atop a\text{
unbarred}}}x_{a}\prod_{a\in T\atop a\text{
barred}}y^{(i(a))}_{a+c(a)-r(a)}
=\sum_{T\in \Bl}\,x^{\go(T)}c_T(\by).
\intertext{Therefore}
a_{\gr}(x)s_{\bgl}\xyb&=
\sum_{\gs\in S_n}\sum_{T\in\Bl}\sgn(\gs)
x^{\gs(\gr)}x^{\go(T)}c_T(\by)\\
&=\sum_{\gs\in S_n}\sum_{T\in\Bl}\sgn(\gs)
x^{\gs(\gr)}x^{\go(\gs T)}c_{\gs T}(\by)\\
&=\sum_{\gs\in S_n}\sum_{T\in\Bl}c_T(\by)\sgn(\gs)
x^{\gs(\gr+\go(T))}
=\sum_{T\in\Bl}c_T(\by)a_{\gr+\go(T)}(x).
\end{align*}
\ni The second equality follows from the fact that $\gs$ is an
involution on $\Bl$; thus as $T$ varies over $\Bl$, so does $\gs
T$. The third equality follows from (\ref{e.perm_properties}).
\end{proof}

\begin{proof}[Proof of Lemma \ref{l.bad_guys_vanish}]
For $T\in\Bl$ and $j$ a positive integer, define $T_{<j}$ to be
the barred skew tableau consisting of the columns of $T$ lying to
the left of column $j$ (and similarly for $T_{\leq j}$, $T_{>j}$,
$T_{\geq j}$).

We will call the $T\in\Bl$ for which $\go(T_{\geq j})\not\in\cP_n$
for some $j$ {\it Bad Guys} (i.e., $T$ is a Bad Guy if and only if
its unbarred column word is not Yamanouchi). Let $T$ be a Bad Guy,
and let $j$ be maximal such that $\go(T_{\geq j})\not\in\cP_n$.
Having selected $j$, let $i$ be minimal such that $\go(T_{\geq
j})_{i}<\go(T_{\geq j})_{i+1}$. Since $\go(T_{>
j})_{i}\geq\go(T_{> j})_{i+1}$ (by the maximality of $j$), we must
have $\go(T_{> j})_{i}=\go(T_{> j})_{i+1}$, and column $j$ of $T$
must have an unbarred $i+1$ but not an unbarred $i$. Thus
\begin{equation}\label{e.tech}
(\gr+\go(T_{\geq j}))_{i}=(\gr+\go(T_{\geq
j}))_{i+1}.
\end{equation}

Define $T^*$ to be the barred skew tableau of shape $\bgl$
obtained from $T$ by replacing $T_{<j}$ by $s_i(T_{<j})$.  It is
clear that $T^*\in\Bl$, and that $T\mapsto T^*$ defines an
involution on the Bad Guys in $\Bl$. Furthermore, by Lemma
\ref{l.transp_properties}(i), $c_{T^*}(\by)=c_T(\by)$. By Lemma
\ref{l.transp_properties}(ii), $\go((T^*)_{< j})=s_i\go(T_{< j})$.
By (\ref{e.tech}), $\gr+\go((T^*)_{\geq j})=\gr+\go(T_{\geq
j})=s_i(\gr+\go(T_{\geq j}))$. Consequently,
$\gr+\go(T^*)=s_i(\gr+\go(T))$, implying that
$a_{\gr+\go(T^*)}(x)=-a_{\gr+\go(T)}(x)$. Therefore
$c_{T^*}(\by)a_{\gr+\go(T^*)}(x)=-c_T(\by)a_{\gr+\go(T)}(x)$.
Thus the contributions to $\sum c_T(\by)a_{\gr+\go(T)}(x)$ of two
Bad Guys paired under $T\mapsto T^*$ cancel, and the contribution
of any bad guy paired with itself is 0.
\end{proof}

\section{Change of Basis Coefficients}


In this section, let $x\hn=(x_1,\ldots,x_n)$ and
$y=(y_1,y_2,\ldots)$ be two sets of variables. Since
$\{s_\gm(x\hn)\mid \gm\in\cP_n\}$ and $\{s_\gm(x\hn\,|\,y)\mid
\gm\in\cP_n\}$ both form $\bZ[y]$-bases for $\bZ[x,y]^{S_n}$, we
have change of basis formulas
\begin{align}
s_\gl(x\hn\,|\,y)&=\sum_{\gm\in\cP_n} c_{\gl,n}^{\gm}(y)
s_\gm(x\hn),\quad\text{for some } c_{\gl,n}^\gm(y)\in \bZ[y].\label{e.change_basis_1}\\
s_\gl(x\hn)&=\sum_{\gm\in\cP_n} d_{\gl,n}^\gm(y)
s_\gm(x\hn\,|\,y),\quad\text{for some } d_{\gl,n}^\gm(y)\in \bZ[y].\label{e.change_basis_2}
\end{align}
Since (\ref{e.change_basis_1}) is a special case of
(\ref{e.lr_def}), Theorem \ref{t.lr_rule} gives a new rule for the
coefficients $c_{\gl,n}^\gm(y)$ (see also \cite[Section 4]{Mo1},
which gives a different tableau-based rule for
$c_{\gl,n}^\gm(y)$). The main purpose of this section is to give a
new rule for the coefficients $d_{\gl,n}^\gm(y)$; this rule
follows from our rule for $c_{\gl,n}^\gm(y)$ and Proposition
\ref{p.dmn_coeffs_1}(iii) below.

We point out that all formulas in Proposition \ref{p.dmn_coeffs_1}
can be deduced as special cases of formulas involving double
Schubert polynomials appearing in Macdonald \cite[Chapter 6]{Mac3}
and Lascoux \cite[Theorem 10.2.6]{La2}. Our approach is to instead
use a formula by Macdonald \cite{Mac2} for $c_{\gl,n}^\gm(y)$,
which appears as Proposition \ref{p.dmn_coeffs_1}(i) below, in
order to prove Proposition \ref{p.dmn_coeffs_1}(ii) and (iii). All
proofs in this section are replicas or modifications of proofs
appearing in Macdonald \cite[Chapter 1]{Mac1}.

Let $r$ be a nonnegative integer and $p$ a positive integer. The
\textbf{$r$-th elementary and complete symmetric polynomials in
variables} $y_1,\ldots,y_p$, denoted by $e_r(y_{(p)})$ and
$h_r(y_{(p)})$ respectively, are defined by the following
generating functions:
\begin{equation}\label{e.gen_sym_polys}
\begin{split}
E(y_{(p)},t)&=\prod_{i=1}^p (1+y_it)=\sum_{r\geq 0}e_r(y_{(p)})
t^r.\\
H(y_{(p)},t)&=\prod_{i=1}^p (1-y_it)^{-1}=\sum_{r\geq
0}h_r(y_{(p)}) t^r.
\end{split}
\end{equation}
It follows that $e_{0}(y_{(p)})=h_{0}(y_{(p)})=1$, and
$e_{r}(y_{(p)})=0$ for $r>p$. For $r<0$, $e_{r}(y_{(p)})$ and
$h_{r}(y_{(p)})$ are defined to be $0$. For
$\gl=(\gl_1,\ldots,\gl_n)\in\cP_n$, let $m$ be any integer greater
than or equal to $\gl'_1$, the number of columns of $\gl$. Define
$\gl^c=(\gl^c_1,\ldots,\gl^c_m)\in \cP_m$, the
\textbf{complementary partition to $\gl$}, by
$\gl^c_i=n-\gl'_{m+1-i}$, $i=1,\ldots,m$.
\begin{figure}[!h]
\begin{center}
\psset{unit=.5cm,linewidth=.02} \pspicture(0,0)(8,4)
\psset{linecolor=lightgray}
\psframe*(0,0)(5,1)
\psframe*(1,1)(5,2)
\psframe*(3,2)(5,3)
\psframe*(5,0)(8,4)
\psset{linecolor=black}
\psline(0,0)(8,0)
\psline(0,1)(8,1)
\psline(0,2)(8,2)
\psline(0,3)(8,3)
\psline(0,4)(8,4)
\psline(0,0)(0,4)
\psline(1,0)(1,4)
\psline(2,0)(2,4)
\psline(3,0)(3,4)
\psline(4,0)(4,4)
\psline(5,0)(5,4)
\psline(6,0)(6,4)
\psline(7,0)(7,4)
\psline(8,0)(8,4)
\psset{linewidth=.08}
\psline(0,0)(0,1)(1,1)(1,2)(3,2)(3,3)(5,3)(5,4)(8,4)
\endpspicture
\end{center}
\caption{\label{f.comp_part}
The partitions $\gl=(5,3,1)$ and $\gl^c=(4,4,4,3,3,2,2,1)$, where $n=4$, $m=8$.}
\end{figure}
\begin{prop}\label{p.dmn_coeffs_1}
(i) $\displaystyle
c_{\gl,n}^\gm(y)=\det\left(e_{\gl_i-\gm_j-i+j}(y_{(\gl_i+n-i)})\right)_{1\leq
i,j\leq n}$.

\ni (ii) $\displaystyle
d_{\gl,n}^\gm(y)=\det\left(h_{\gl_i-\gm_j-i+j}((-y)_{(\gm_i+n+1-i)})\right)_{1\leq
i,j\leq n}$.

\ni (iii) $\displaystyle
d_{\gl,n}^\gm(y)=c_{\gm^c,m}^{\gl^c}(-y)$.
\end{prop}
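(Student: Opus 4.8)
The plan is to obtain Proposition \ref{p.dmn_coeffs_1}(iii) from the fact that the transition matrices in (\ref{e.change_basis_1}) and (\ref{e.change_basis_2}) are mutually inverse, combined with a dual Cauchy identity for factorial Schur functions. Write $C=(c_{\gl,n}^\gm(y))$ and $D=(d_{\gl,n}^\gm(y))$, with rows and columns indexed by the partitions in $\cP_n$ contained in the rectangle $(m^n)$. Substituting (\ref{e.change_basis_2}) into (\ref{e.change_basis_1}) gives $\sum_\gm c_{\gl,n}^\gm(y)\,d_{\gm,n}^\nu(y)=\gd_{\gl\nu}$; since $c_{\gl,n}^\gm(y)=0$ unless $\gm\subseteq\gl$ and $c_{\gl,n}^\gl(y)=1$, the matrix $C$ is unitriangular with respect to containment, the sums are finite, and $D=C^{-1}$. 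Thus it suffices to identify the inverse of $C$, and the claim is that it is the ``complemented'' matrix $P$ with entries $P_{\gl\gk}=c_{\gl^c,m}^{\gk^c}(-y)$.

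The key input is the factorial dual Cauchy identity
\begin{equation*}
\sum_{\gl\subseteq(m^n)}s_\gl(x\hn\,|\,y)\,s_{\gl^c}(x'_{(m)}\,|\,(-y))=\prod_{i=1}^n\prod_{j=1}^m(x_i+x'_j),
\end{equation*}
in which all dependence on $y$ cancels on the right (for $n=m=1$ it reads $1\cdot(x'_1-y_1)+(x_1+y_1)\cdot 1=x_1+x'_1$). Granting it, I expand the left side using (\ref{e.change_basis_1}) in the $x$-variables and in the $x'$-variables (with $y$ replaced by $-y$), and I expand the right side by the $y=0$ specialization of the same identity, $\prod_{i,j}(x_i+x'_j)=\sum_{\gm\subseteq(m^n)}s_\gm(x\hn)\,s_{\gm^c}(x'_{(m)})$. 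Comparing the coefficients of $s_\gm(x\hn)\,s_\nu(x'_{(m)})$ and using that these products are linearly independent over $\bZ[y]$ yields the orthogonality relation
\begin{equation*}
\sum_{\gl\subseteq(m^n)}c_{\gl,n}^\gm(y)\,c_{\gl^c,m}^{\nu}(-y)=\gd_{\nu,\gm^c}.
\end{equation*}
Setting $\nu=\gk^c$ (a bijection on the partitions in the rectangle) turns this into $\sum_\gl c_{\gl,n}^\gm(y)\,c_{\gl^c,m}^{\gk^c}(-y)=\gd_{\gm\gk}$, i.e.\ $C^{T}P=I$. Hence $P=(C^{T})^{-1}=(C^{-1})^{T}=D^{T}$, so $c_{\gl^c,m}^{\gk^c}(-y)=d_{\gk,n}^\gl(y)$; relabelling $\gl\to\gm$ and $\gk\to\gl$ gives $d_{\gl,n}^\gm(y)=c_{\gm^c,m}^{\gl^c}(-y)$, which is (iii).

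The hard part is the factorial dual Cauchy identity, and especially the cancellation of $y$. I would prove it by the factorial analogue of the classical Vandermonde--Laplace argument (a modification of Macdonald, Chapter 1). Put $(w\,|\,y)^c=(w+y_1)\cdots(w+y_c)$, and recall the factorial bialternant $s_\gl(x\hn\,|\,y)=\det\big[(x_i\,|\,y)^{\gl_j+n-j}\big]_{1\le i,j\le n}\big/a_\gr(x)$, the analogue of Corollary \ref{c.lr2}. Consider the $(n+m)\times(n+m)$ determinant $\det\big[(t_i\,|\,y)^{\,j-1}\big]_{1\le i,j\le n+m}$ in the variables $(t_1,\dots,t_{n+m})=(x_1,\dots,x_n,-x'_1,\dots,-x'_m)$. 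Two facts drive the argument. First, because $(w\,|\,y)^c=w^c+(\text{lower order in }w)$, its columns differ from those of the ordinary Vandermonde matrix by unitriangular column operations, so the determinant equals $\prod_{1\le i<j\le n+m}(t_j-t_i)$; evaluated in the variables $t$, this product is independent of $y$ and factors as $\pm\big(\prod_{i<i'}(x_{i'}-x_i)\big)\big(\prod_{j<j'}(x'_{j'}-x'_j)\big)\prod_{i,j}(x_i+x'_j)$, which is the source of the cancellation. Second, $(-x'\,|\,y)^c=(-1)^c(x'\,|\,-y)^c$, so the factorial powers in the $-x'$ rows are, up to sign, factorial powers of $x'$ at the parameter $-y$. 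Expanding the determinant by Laplace along its first $n$ rows, each $n$-element column set indexes a partition $\gl$ and the complementary $m$-element set indexes $\gl^c$; the resulting products of minors reproduce $s_\gl(x\hn\,|\,y)$ and $s_{\gl^c}(x'_{(m)}\,|\,(-y))$, each times its Vandermonde. Dividing by the two Vandermonde products gives the identity. The delicate points, which I would carry out in full, are the sign bookkeeping in the Laplace expansion and the check that the complementary column set produces exactly the partition $\gl^c$ with the indexing fixed in this section.
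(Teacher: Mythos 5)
Your argument addresses only part (iii), and for that part it takes a genuinely different route from the paper: you invert the transition matrix $C=(c_{\gl,n}^\gm(y))$ by pairing it against its complemented version via a factorial dual Cauchy identity, whereas the paper inverts the single matrix $A=\left(e_{i-j}(y_{(i)})\right)$ (Lemma \ref{l.AB_inv}), realizes $c_{\gl,n}^\gm(y)$ and $d_{\gl,n}^\gm(y)$ as entries of the compound matrices $\wedge^nA$ and $\wedge^nB=(\wedge^nA)^{-1}$, and then derives (iii) from the Laplace/complementary-minor formula for $(\wedge^nA)^{-1}$ together with the combinatorial facts $\gr(I_\gn)=|\gn|-n$ and $I'_\gn=I_{\gn^c}$. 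The two arguments are cousins---both ultimately rest on a Laplace expansion and the same index-set complementation---but yours packages the complementation into a Cauchy identity rather than into Jacobi's identity for minors of the inverse. Your reduction of (iii) to the orthogonality relation $\sum_\gl c_{\gl,n}^\gm(y)\,c_{\gl^c,m}^{\gk^c}(-y)=\gd_{\gm\gk}$ is correct, granted the unitriangularity of $C$ (which does follow from (i), or from Theorem \ref{t.lr_rule}) and the Cauchy identity itself.

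There are, however, two genuine gaps. First, part (ii) is not proved and does not fall out of your argument: combining (i) with (iii) expresses $d_{\gl,n}^\gm(y)$ as an $m\times m$ determinant in elementary symmetric polynomials of $-y$, not as the stated $n\times n$ determinant in complete symmetric polynomials, and passing between the two requires a further duality of Jacobi--Trudi type that you would have to supply (the paper gets (ii) directly from $\wedge^nB=(\wedge^nA)^{-1}$). Second, the factorial dual Cauchy identity---the linchpin of your proof of (iii)---is only sketched: you explicitly defer the sign bookkeeping in the Laplace expansion and the verification that the complementary column set yields exactly $\gl^c$ under the indexing conventions of this section (this is precisely the content of the paper's property $I'_\gn=I_{\gn^c}$ and its accompanying sign computation), and you also invoke the factorial bialternant formula $s_\gl(x\hn\,|\,y)=\det\left[(x_i\,|\,y)^{\gl_j+n-j}\right]/a_\gr(x)$, which is true (Macdonald's sixth variation) but is neither proved in the paper nor implied by Corollary \ref{c.lr2}. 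The strategy is sound and these gaps are fillable, but as written the proposal establishes neither (ii) nor, in full detail, (iii).
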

\ni Before  proving this proposition, we prove the following lemma
(see also \cite[Chapter
1]{Mac1}). Let $N\in\bN$, 
$A=\left(e_{i-j}\left(y_{(i)}\right)\right)_{0\leq i,j\leq N-1}$,
and $B=\left(h_{i-j}\left((-y)_{(j+1)}\right)\right)_{0\leq
i,j\leq N-1}$.
\begin{lem}\label{l.AB_inv} $A$ and $B$ are inverse matrices.
\end{lem}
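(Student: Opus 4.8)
The plan is to show directly that $AB=I$; since $A$ and $B$ are square matrices of the same size $N$, this immediately gives that they are two-sided inverses of one another. First I would record that both matrices are lower triangular with unit diagonal: the entry $A_{ij}=e_{i-j}(y_{(i)})$ vanishes when $j>i$ (as $e_r=0$ for $r<0$) and equals $e_0=1$ when $i=j$, while $B_{jk}=h_{j-k}((-y)_{(k+1)})$ vanishes when $j<k$ and equals $h_0=1$ when $j=k$. Consequently $AB$ is lower triangular with $1$'s on the diagonal, so the entries with $i\leq k$ are already correct, and it remains only to show that the strictly-lower entries $(AB)_{ik}$ with $i>k$ vanish.

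Second, I would express these entries by coefficient extraction from the generating functions \eqref{e.gen_sym_polys}. Fixing $i>k$ and writing $d=i-k>0$, the product rule gives
\[
(AB)_{ik}=\sum_{j} e_{i-j}(y_{(i)})\,h_{j-k}((-y)_{(k+1)})=\sum_{a+b=d} e_a(y_{(i)})\,h_b((-y)_{(k+1)})=[t^d]\,E(y_{(i)},t)\,H((-y)_{(k+1)},t),
\]
where the conditions $a,b\geq 0$ are enforced automatically by the vanishing conventions on $e_a$ and $h_b$, and $[t^d]$ denotes extraction of the coefficient of $t^d$.

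Third, and this is the heart of the matter, I would simplify the product of generating functions. Since $H((-y)_{(k+1)},t)=\prod_{l=1}^{k+1}(1+y_lt)^{-1}$ is the reciprocal of part of $E(y_{(i)},t)=\prod_{l=1}^{i}(1+y_lt)$, and since $i\geq k+1$ because $i>k$, the two telescope to
\[
E(y_{(i)},t)\,H((-y)_{(k+1)},t)=\prod_{l=k+2}^{i}(1+y_lt),
\]
a polynomial in $t$ of degree $i-k-1=d-1$. Its coefficient of $t^d$ therefore vanishes, giving $(AB)_{ik}=0$ and completing the proof that $AB=I$.

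The computation is short, and I do not anticipate a genuine obstacle; the only point requiring care is the bookkeeping of which variable sets appear in the two factors. The cancellation works precisely because the $H$-factor in column $k$ involves the first $k+1$ variables while the $E$-factor in row $i$ involves the first $i\geq k+1$ variables, so that the denominator divides the numerator and no negative powers of $t$ intrude. The main task is thus to set up the index ranges correctly so that the telescoping ratio is seen to be a polynomial of degree strictly less than $d$.
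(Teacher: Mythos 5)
Your proof is correct and follows essentially the same route as the paper: both arguments reduce the off-diagonal entries of $AB$ to the coefficient of $t^{i-k}$ in $E(y_{(i)},t)H((-y)_{(k+1)},t)$, which telescopes to a polynomial of degree $i-k-1$ (the paper phrases this as the vanishing identity (\ref{e.poly_vanishing}) with $q=i$, $p=k+1$, and handles the cases $i<k$ and $i=k$ by direct inspection rather than by invoking triangularity, but the substance is identical). No gaps.
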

\begin{proof}
Let $q\geq p$. By (\ref{e.gen_sym_polys}),
$E(y_{(q)},t)H((-y)_{(p)},t)$ is a polynomial of degree $q-p$ in
$t$. Also by (\ref{e.gen_sym_polys}),
\begin{equation*}
E(y_{(q)},t)H((-y)_{(p)},t)=\sum_{M\geq 0}\left(\sum_{r+s=M}
e_r(y_{(q)})h_s((-y)_{(p)})\right)t^M.
\end{equation*}
Thus for $q\geq p$,
\begin{equation}\label{e.poly_vanishing}
\sum_{r+s=M} e_r(y_{(q)})h_s((-y)_{(p)})=0,\ \text{if }M>q-p.
\end{equation}

For $0\leq i,k\leq N-1$, consider
$(AB)_{i,k}=\sum_{j=0}^{N-1}
e_{i-j}(y_{(i)})h_{j-k}((-y)_{(k+1)})$.
If $i<k$, then for each $j\in\{0,\ldots,N-1\}$, either $i-j<0$ or
$j-k<0$; thus $(AB)_{i,k}=0$.  If $i>k$, then $(AB)_{i,k}=0$ by
(\ref{e.poly_vanishing}), $M=i-k$.  If $i=k$, then $(AB)_{i,k}=1$.
This completes the proof.
\end{proof}

\begin{proof}[Proof of Proposition \ref{p.dmn_coeffs_1}]
As noted above, (i) is proven in Macdonald \cite{Mac2}. Define
$\cP_{n,m}=\{\gn\in\cP_n\mid\gn'_1\leq m\}$. Let $N=n+m$, and let
$I_{n,N}$ denote the $n$-element subsets of $\{0,\ldots,N-1\}$,
which we always assume are listed in increasing order.  The map
$\pi:\cP_{n,m}\to I_{n,N}$ given by
$\displaystyle\gn=(\gn_i)_{i=1}^n\mapsto
I_\gn=\{\gn_i+n-i\}_{i=n}^1$, is a bijection.

The matrix $A$ is lower triangular with $1$'s along the diagonal,
so $\det(A)=1$. For $I,J\in I_{n,N}$, let $A_{I,J}$ denote the
$n\times n$ submatrix of $A$ with row set $I$ and column set $J$.
By (i), $c_{\gl,n}^\gm(y)=\det(A_{I_\gl,I_\gm})$. This implies the
following interpretation of $c_{\gl,n}^\gm(y)$: letting
$\wedge^nA$ denote the $\binom{N}{n}\times \binom{N}{n}$ matrix
$(\det(A_{I,J}))_{I,J\in I_{n,N}}$, where the rows and columns of
$\wedge^nA$ are ordered by some order on $I_{n,N}$,
$c_{\gl,n}^\gm=(\wedge^nA)_{I_\gl,I_\gm}$ (note that
$(\wedge^nA)_{I,J}$ refers to a single entry of $\wedge^nA$,
whereas $A_{I,J}$ refers to an $n\times n$ submatrix of $A$). By
Lemma \ref{l.AB_inv}, $\wedge^nB=(\wedge^nA)^{-1}$, and thus
$d_{\gl,n}^\gm(y)=(\wedge^nA)^{-1}_{I_{\gl},I_{\gm}}=(\wedge^nB)_{I_{\gl},I_{\gm}}=\det(B_{I_\gl,I_\gm})$.
This proves (ii).

To prove (iii), we give a different expression for
$(\wedge^nA)^{-1}$.  For $I\in I_{n,N}$, define $\gr_I=\#\{j<i\mid
i\in I, j\in I'\}$ and $I'=\{0,\ldots,N-1\}\setminus I\in
I_{m,N}$. The following formula gives the Laplace expansion for
determinants (see, for example, \cite[III, \S8, no. 6]{Bo}): for
$I\in I_{n,N}$, $K\in I_{N-n,N}$,
\begin{equation*}
\sum_{J\in I_{n,N}} (-1)^{\gr(I)+\gr(J)} \det(A_{I,J})\det(A_{K,J'})=
\begin{cases}
\det(A) &\text{if }K=I'\\
0 &\text{if }K\neq I'
\end{cases}.
\end{equation*}
Thus, since $\det(A)=1$, $\wedge^nA$ is invertible, and
\begin{equation*}
(\wedge^nA)^{-1}=\left((-1)^{\gr(I)+\gr(J)}\det(A_{J',I'})\right)_{I,J\in I_{n,N}}.
\end{equation*}
Consequently,
\begin{equation}\label{e.dlmn1}
d_{\gl,n}^\gm(y)=(\wedge^nA)^{-1}_{I_{\gl},I_{\gm}}=(-1)^{\gr(I_\gl)+\gr(I_\gm)}\det(A_{I'_{\gm},I'_{\gl}}).
\end{equation}

For $\gn\in\cP_{n,m}$, consider the following two elementary
properties of $I_\gn$.
\begin{equation*}
1.\ \gr(I_\gn)=|\gn|-n
\qquad\qquad
2.\ I'_\gn=I_{\gn^c}
\end{equation*}
\ni To prove property 1, note that for $I_\gn=\{i_1,\ldots,i_n\}$,
$i_1<\cdots<i_n$, we have $\gr(I_\gn)=(i_1-1)+\cdots+(i_n-n)$ and
$|\gn|=(i_1-0)+\cdots+(i_n-(n-1))$. To prove property 2, partition
the rectangular Young diagram $D$ with $n$ rows and $m$ columns as
$D=\gn\,\dot{\cup}\,\gn^c$ (see Figure \ref{f.comp_part}). Number
the boundary segments, which are darkened in Figure
\ref{f.comp_part}, from $0$ to $m+n-1$. The numbers on the
vertical and horizontal segments are the elements of $I_{\gn}$ and
$I_{\gn^c}$ respectively. This completes the proof of the property
2  (see also \cite[(1.7)]{Mac1}).

Applying these two properties to (\ref{e.dlmn1}),
\begin{equation*}
d_{\gl,n}^\gm(y)
=(-1)^{|\gl|+|\gm|}\det(A_{I_{\gm^c},I_{\gl^c}})
=(-1)^{|\gl|+|\gm|}c_{\gm^c,m}^{\gl^c}(y).
\end{equation*}
Proposition \ref{p.dmn_coeffs_1}(iii) now follows from the fact
that $c_{\gm^c,m}^{\gl^c}(y)$ is a homogeneous polynomial of
degree $|\gm^c|-|\gl^c|=(nm-|\gm|)-(nm-|\gl|)=|\gl|-|\gm|$; thus
$(-1)^{|\gl|+|\gm|}c_{\gm^c,m}^{\gl^c}(y)
=(-1)^{|\gl|-|\gm|}c_{\gm^c,m}^{\gl^c}(y)
=c_{\gm^c,m}^{\gl^c}(-y)$.
\end{proof}

\begin{rem} Theorem \ref{t.lr_rule} combined with Proposition \ref{p.dmn_coeffs_1}
leads to a solution to the problem discussed in the
introduction of expanding $s_{\bgl}\xyb$ in the basis of factorial
Schur functions:
$s_{\bgl}\xyb=\sum_{\gm\in\cP_n}e\blm(\by)s_\gm\xyi$, where
\begin{equation*}
e\blm(\by)=\sum_{\gn\in\cP_n}c_{\bgl}^\gn(\by)\,d_{\gn,n}^\gm(y^{(i)}).
\end{equation*}
Additionally, one can express $c_{\bgl}^\gn(\by)$ in terms of
change of basis coefficients and classical Littlewood-Richardson
coefficients.  For example, for $r=2$,
\begin{equation*}
c_{\bgl}^\gm(\by)=\sum_{\ga,\gb\in\cP_n}c_{\gl^{(1)},n}^\ga(y^{(1)})\,
c_{\gl^{(2)},n}^\gb(y^{(2)})\,c_{\ga,\gb}^\gm,
\end{equation*}
where $c_{\ga,\gb}^\gm\in\bZ$ is the classical
Littlewood-Richardson coefficient.
\end{rem}



\providecommand{\bysame}{\leavevmode\hbox
to3em{\hrulefill}\thinspace}
\providecommand{\MR}{\relax\ifhmode\unskip\space\fi MR }
\providecommand{\MRhref}[2]{%
  \href{http://www.ams.org/mathscinet-getitem?mr=#1}{#2}
} \providecommand{\href}[2]{#2}

\end{document}